\theoremstyle{plain}
\newtheorem{thm}{Theorem}[section]
\newtheorem{assumption}{Assumption}[section]
\newtheorem{lem}[thm]{Lemma}
\newtheorem{Def}{Definition}[section]
\newtheorem{exmp}{Example}[section]
\newtheorem{rem}{Remark}
\title{Numerical construction of spherical $t$-designs by Barzilai-Borwein method}
\author[a]{Yuchen Xiao\thanks{E-mail address: fkxych@stu.jnu.edu.cn}}
\author[b,c]{Congpei An\thanks{Corresponding author: andbachcp@gmail.com, cpan@cuhk.edu.hk}}
\affil[a]{Department of Mathematics, Jinan University, Guangzhou, China}
\affil[b]{School of Economics Mathematics, Southwestern University of Finance and Economics, Chengdu, China}
\affil[c]{Current address: Department of Systems Engineering and Engineering Management, The Chinese University of Hong Kong, Hong Kong, China}
\date{}
\begin{document}
\maketitle
\begin{abstract}
\noindent A point set $\mathrm X_N$ on the unit sphere is a spherical $t$-design is equivalent to the nonnegative quantity $A_{N,t+1}$ vanished. We show that if $\mathrm X_N$ is a stationary point set of  $A_{N,t+1}$ and the minimal singular value of basis matrix is positive, then $\mathrm X_N$ is a spherical $t$-design. Moreover, the numerical construction of spherical $t$-designs is valid by using Barzilai-Borwein method. We obtain numerical spherical $t$-designs with $t+1$ up to $127$ at $N=(t+2)^2$.
\\\textbf{Keywords.} Spherical $t$-designs, Variational characterization, Barzilai-Borwein method, Singular values.
\\\textbf{2010 MSC.} 65D99, 65F99.
\end{abstract}
\section{Introduction}
Distributing finite points on the unit sphere is a challenging problem in the 21st century \cite{smale1998mathematical}. Spherical $t$-design is to find the `good' finite sets of points on the unit sphere $\mathbb{S}^d:=\{\bm x\in\mathbb{R}^{d+1}\vert\lVert\bm x\rVert_2=1\}$ for spherical polynomial approximations. Spherical $t$-design is very useful in approximation theory, geometry and combinatorics. Recently, it has been applied in quantum mechanics (for quantum $t$-design) and statistics (for rotatable design).
\begin{Def}
\label{Def1}
A finite set $\mathrm X_N:=\{\bm x_1,\ldots,\bm x_N\}\subset\mathbb S^d$ is a spherical $t$-design if for any polynomial $p:\mathbb{R}^{d+1}\to\mathbb{R}$ of degree at most $t$ such that the average value of $p$ on the $\mathrm X_N$ equals the average value of $p$ on $\mathbb{S}^d$, i.e.,
\begin{align}
\frac{1}{N}\sum_{i=1}^N p(\bm x_i)=\frac{1}{\vert\mathbb{S}^d\vert}\int_{\mathbb{S}^d} p(\bm x)\,d\omega(\bm x)\qquad\forall p\in\Pi_t,
\end{align}
where $\vert\mathbb{S}^d\vert$ is the surface of the whole unit sphere $\mathbb{S}^d$, $\Pi_t:=\Pi_t(\mathbb S^d)$ is the space of spherical polynomials on $\mathbb S^d$ with degree at most $t$ and $d\omega(\bm x)$ denotes the surface measure on $\mathbb{S}^d$.
\end{Def}
The concept of spherical $t$-design was introduced by Delsarte et al. \cite{delsarte1977spherical} in 1977. From then on, spherical $t$-designs have been studied extensively \cite{bondarenko2013optimal,sloan2004extremal,an2010well,chen2006existence,chen2011computational,sloan2009variational,bannai2009survey}.
In this paper, we pay attention to 2-dimensional unit sphere $\mathbb{S}^2$.

A lower bound on the number of points $N$ to construct a spherical $t$-design for any $t\ge 1$ on $\mathbb{S}^2$ was given in \cite{delsarte1977spherical}:
\begin{align*}
N\ge N^{*}=
\begin{cases}
\frac{1}{4}(t+1)(t+3),&\text{$t$ is odd,}\\
\frac{1}{4}(t+2)^2,&\text{$t$ is even.}
\end{cases}
\end{align*}
It is shown that the lower bound can not be achieved, in other words, there is no spherical $t$-design with $N^{*}$ points for any $t\ge 2$. Bondarenko et al. \cite{bondarenko2013optimal} proved spherical $t$-designs exists for $O(t^2)$ points. From the work of Chen et al. \cite{chen2011computational}, we know that spherical $t$-designs with $(t+1)^2$ points exist for all degrees $t$ up to 100 on $\mathbb{S}^2$. This encourage us to find higher degrees $t$ for spherical $t$-designs.

Extremal points are sets of $(t+1)^2$ points on $\mathbb{S}^2$ which maximize the determinant of a basis matrix for an arbitrary basis of $\Pi_t$ \cite{sloan2004extremal}. For $N=(t+1)^2$, Chen and Womersley verified a spherical $t$-design exist in a neighborhood of an extremal system \cite{chen2006existence}. For $N\ge{(t+1)^2}$, An et al. \cite{an2010well} verified extremal spherical $t$-designs exist for all degrees $t$ up to 60 and provided well conditioned spherical $t$-designs for interpolation and numerical integration.

By now, numerical methods have been developed for finding spherical $t$-designs. The problem of finding a spherical $t$-design is expressed as solving nonlinear equations or optimization problems \cite{sloan2009variational,an2010well}.
However, the first order methods for computing spherical $t$-designs are rarely developed. In this paper, we numerically construct spherical $t$-designs by using Barzilai-Borwein method (BB method). The BB method \cite{barzilai1988two} is a gradient method with modified step sizes, which is motivated by Newton's method but not involves any Hessian. Further investigations \cite{dai2002r} showed that BB method is locally $R$-linear convergent for general objective functions.

In the next section, we present the required techniques, definitions and first order conditions for spherical $t$-designs. The BB method for computing spherical $t$-designs and its convergence analysis are presented in Section \ref{sec3}. Numerical results for point sets which $t+1$ up to 127 and $N=(t+2)^2=16384$ are included in Section \ref{sec5}. Section \ref{sec6} ends this paper with a brief conclusion.
\section{First order conditions for spherical $t$-design}
\label{sec2}
$\{\mathrm Y_{0}^1,\mathrm Y_{1}^1,\ldots,\mathrm Y_{t}^{2t+1}\}$ for degree $n= 0,...,t$ and order $k= 1,\ldots,2n+1$ is a complete set of orthonormal real spherical harmonics basis for $\Pi_t$, where orthogonality with respect to the $L_2$ inner product \cite{freeden1998constructive},
\begin{align}
\langle f,g\rangle_{\mathbb S^2}:=\int_{\mathbb{S}^2} f(\bm x)g(\bm x)\,d\omega(\bm x),\qquad f,g\in L_2(\mathbb S^2).
\end{align}
Note that $\mathrm Y_{0}^1=\frac{1}{\sqrt{4\pi}}$. It is well known that the addition theorem for spherical harmonics on $\mathbb S^2$ gives
\begin{align}
\label{eq3}
\sum_{k=1}^{2n+1} \mathrm Y_{n}^k(\bm x)\mathrm Y_{n}^k(\bm y)=\frac{2n+1}{4\pi}\mathrm P_n(\langle\bm x,\bm y\rangle)\qquad\forall\bm x,\bm y\in\mathbb S^2,
\end{align}
where $\mathrm P_n:[-1,1]\to\mathbb R$ is Legendre polynomial and $\langle\bm x,\bm y\rangle:=\bm x^\top\bm y$ is the inner product in $\mathbb R^3$. Sloan and Womersley \cite{sloan2009variational} introduced a variational characterization of spherical $t$-designs
\begin{align}
\label{eq4}
A_{N,t}(\mathrm X_N):=\frac{4\pi}{N^2}\sum_{n=1}^t\sum_{k=1}^{2n+1}(\sum_{i=1}^N \mathrm Y_{n}^k(\bm x_i))^2=\frac{4\pi}{N^2}\sum_{j=1}^N\sum_{i=1}^N\sum_{n=1}^t\frac{2n+1}{4\pi}\mathrm P_n(\langle\bm x_j,\bm x_i\rangle).
\end{align}
\begin{thm}[\cite{sloan2009variational}]
\label{thm1}
Let $t\ge 1$, and $\mathrm X_N\subset\mathbb{S}^2$. Then
\begin{align}
0\leq A_{N,t}(\mathrm X_N)\leq (t+1)^2-1,
\end{align}
and $\mathrm X_N$ is a spherical {\it t-}design if and only if
\begin{align*}
A_{N,t}(\mathrm X_N)=0.
\end{align*}
\end{thm}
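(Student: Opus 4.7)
The plan is to prove the three claims in order: (i) nonnegativity, (ii) the upper bound $(t+1)^2-1$, and (iii) the equivalence with the $t$-design property.

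Nonnegativity is immediate from the first expression in \eqref{eq4}: $A_{N,t}(\mathrm X_N)$ is $\frac{4\pi}{N^2}$ times a sum of squares of real numbers, so it is $\ge 0$.

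For the upper bound, I would bound each inner sum by Cauchy--Schwarz,
\begin{align*}
\Bigl(\sum_{i=1}^N \mathrm Y_n^k(\bm x_i)\Bigr)^{\!2}\le N\sum_{i=1}^N \mathrm Y_n^k(\bm x_i)^2,
\end{align*}
and then swap the $k$-sum and $i$-sum and apply the addition theorem \eqref{eq3} with $\bm y=\bm x=\bm x_i$. Since $\mathrm P_n(1)=1$, this gives $\sum_{k=1}^{2n+1}\mathrm Y_n^k(\bm x_i)^2=\frac{2n+1}{4\pi}$, independent of $\bm x_i$. Substituting back, the $N$-dependence cancels and
\begin{align*}
A_{N,t}(\mathrm X_N)\le\frac{4\pi}{N^2}\cdot N\cdot N\sum_{n=1}^t\frac{2n+1}{4\pi}=\sum_{n=1}^t(2n+1)=(t+1)^2-1,
\end{align*}
using the standard identity $\sum_{n=0}^t(2n+1)=(t+1)^2$.

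For the equivalence, the key observation is that $\{\mathrm Y_n^k:n=0,\dots,t,\ k=1,\dots,2n+1\}$ is an orthonormal basis of $\Pi_t$, and for $n\ge 1$ every $\mathrm Y_n^k$ is orthogonal to the constant $\mathrm Y_0^1=1/\sqrt{4\pi}$, hence $\int_{\mathbb S^2}\mathrm Y_n^k\,d\omega=0$. The $t$-design condition is linear in $p$, so by expanding $p\in\Pi_t$ in this basis it is equivalent to requiring
\begin{align*}
\sum_{i=1}^N \mathrm Y_n^k(\bm x_i)=0,\qquad n=1,\dots,t,\ k=1,\dots,2n+1
\end{align*}
(the $n=0$ basis element gives the trivial identity $1=1$). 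Since $A_{N,t}(\mathrm X_N)$ is $\frac{4\pi}{N^2}$ times a sum of squares of exactly these quantities, it vanishes if and only if all of them vanish, i.e.\ if and only if $\mathrm X_N$ is a spherical $t$-design.

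None of the steps looks hard; the only place requiring a little care is the equivalence direction, where one must justify that testing against the harmonic basis is enough—this is just linearity plus the fact that the constant term is automatically matched.
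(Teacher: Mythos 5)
Your proof is correct, and the paper itself gives no proof of this theorem---it is imported verbatim from Sloan and Womersley \cite{sloan2009variational}, where the argument is essentially the one you give: nonnegativity from the sum-of-squares form, the upper bound via Cauchy--Schwarz together with the addition theorem \eqref{eq3} at $\mathrm P_n(1)=1$, and the equivalence by expanding $p\in\Pi_t$ in the orthonormal harmonic basis and noting that the $n=0$ term is automatically matched while $\int_{\mathbb S^2}\mathrm Y_n^k\,d\omega=0$ for $n\ge 1$. No gaps; your care about linearity in the equivalence step is exactly the right point to flag.
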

\noindent It is known that $\mathrm X_N$ is a spherical $t$-design if and only if $A_{N,t}(\mathrm X_N)$ vanished. Naturally, one might consider the first order condition to check the global minimizer of $A_{N,t}(\mathrm X_N)$.
\begin{Def}
\label{Def3}
A point $\bm x$ is a stationary point of $f\in C^1(\mathbb S^2)$ if $\nabla^*f(\bm x)=0$, where $\nabla^*:=\nabla^*_{\mathbb S^2}$ is the spherical gradient (or surface grident \cite{freeden1998constructive}) of $f$.
\end{Def}

Let the basis matrix be
$
\mathbf Y_t(\mathrm X_N):=
\begin{bmatrix}
\frac{1}{\sqrt{4\pi}}\mathbf e^\top\\\\
\mathbf Y_t^0(\mathrm X_N)
\end{bmatrix}\in\mathbb R^{(t+1)^2\times N},
$
where $\mathbf e=
\begin{bmatrix}
1\\
\vdots\\
1
\end{bmatrix}
\in\mathbb R^N$ and $\mathbf Y_t^0(\mathrm X_N)=
\begin{bmatrix}
\mathrm Y_1^1(\bm x_1)&\cdots&\mathrm Y_1^1(\bm x_N)\\
\vdots&\ddots&\vdots\\
\mathrm Y_t^{2t+1}(\bm x_1)&\cdots&\mathrm Y_t^{2t+1}(\bm x_N)
\end{bmatrix}
\in\mathbb R^{(t+1)^2-1\times N}.$

\begin{Def}
\label{Def6}
A finite set $\mathrm X_N:=\{\bm x_1,\ldots,\bm x_N\}\subset\mathbb S^2$ is called a fundamental system for $\Pi_t$ if the zero polynomial is the only element of $\Pi_t$ that vanishes at each point in $\mathrm X_N$.
\end{Def}
An et al. \cite{an2010well} described the fundamental system in finding spherical $t$-designs.
\begin{lem}[\cite{an2010well}]
\label{lem4}
A set $\mathrm X_N\subset\mathbb S^2$ is a fundamental system for $\Pi_t$ if and only if $\mathbf Y_t(\mathrm X_N)$ is of full row rank $(t+1)^2$.
\end{lem}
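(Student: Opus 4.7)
The plan is to translate the definition of a fundamental system into a statement about the kernel of a linear map, and then read off the rank condition from elementary linear algebra. Since $\{Y_0^1, Y_1^1, \ldots, Y_t^{2t+1}\}$ is a basis of $\Pi_t$, every $p \in \Pi_t$ can be written uniquely as $p = \sum_{n=0}^{t} \sum_{k=1}^{2n+1} c_{n,k} Y_n^k$ for a coefficient vector $\mathbf{c} \in \mathbb{R}^{(t+1)^2}$. I would open the proof by introducing this coefficient identification.

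Next, I would observe that evaluating $p$ at the points of $\mathrm X_N = \{\bm x_1,\ldots,\bm x_N\}$ gives the vector $(p(\bm x_1),\ldots,p(\bm x_N))^\top = \mathbf Y_t(\mathrm X_N)^\top \mathbf{c}$, by the very construction of the basis matrix $\mathbf Y_t(\mathrm X_N)$ (whose rows are the basis functions evaluated at the sample points). Hence $p$ vanishes at every point of $\mathrm X_N$ if and only if $\mathbf{c} \in \ker\bigl(\mathbf Y_t(\mathrm X_N)^\top\bigr)$.

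The definition of fundamental system then translates directly: $\mathrm X_N$ is fundamental for $\Pi_t$ if and only if this kernel is trivial, i.e., the columns of $\mathbf Y_t(\mathrm X_N)^\top$ (equivalently, the rows of $\mathbf Y_t(\mathrm X_N)$) are linearly independent. Since $\mathbf Y_t(\mathrm X_N) \in \mathbb{R}^{(t+1)^2 \times N}$, linear independence of its rows is exactly the statement that it has full row rank $(t+1)^2$, which closes both implications simultaneously.

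There is no real obstacle here; the lemma is essentially a repackaging of the definitions of basis, evaluation, and rank. The only small care needed is to keep straight the orientation of $\mathbf Y_t(\mathrm X_N)$ (rows indexed by basis elements, columns by points), so that ``zero polynomial only'' corresponds to ``rows linearly independent'' rather than ``columns linearly independent.'' Once this bookkeeping is fixed, the proof is a two-line equivalence chain.
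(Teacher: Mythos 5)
Your proof is correct. The paper itself states this lemma without proof, importing it directly from the cited reference \cite{an2010well}, so there is no in-paper argument to compare against; your kernel-of-the-evaluation-map argument is the standard one underlying the cited result, and your bookkeeping of the matrix orientation (rows indexed by the basis $\{\mathrm Y_0^1,\ldots,\mathrm Y_t^{2t+1}\}$, columns by the points, so that the fundamental-system condition becomes triviality of $\ker(\mathbf Y_t(\mathrm X_N)^\top)$ and hence full row rank) is exactly right.
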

\begin{lem}[\cite{an2010well}]
\label{lem2}
Let $t\ge 2$ and $N\ge (t+2)^2$ . Assume $\mathrm X_N\subset\mathbb S^2$ is a stationary
point set of $A_{N,t}$ and $\mathrm X_N$ is a fundamental system for $\Pi_{t+1}$. Then $\mathrm X_N$ is a spherical $t$-design.
\end{lem}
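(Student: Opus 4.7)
The plan is to translate the stationary hypothesis into pointwise vanishing of a triple of low-degree spherical polynomials and then use the fundamental system property to upgrade pointwise to identical vanishing on $\mathbb S^2$. First I set
\[
p(\bm x):=\sum_{n=1}^{t}\sum_{k=1}^{2n+1}a_n^{k}\,\mathrm Y_n^{k}(\bm x)\in\Pi_t,\qquad a_n^{k}:=\sum_{i=1}^{N}\mathrm Y_n^{k}(\bm x_i),
\]
so that $A_{N,t}(\mathrm X_N)=\frac{4\pi}{N^{2}}\sum_{n,k}(a_n^{k})^{2}$. Differentiating along $\mathbb S^2$ with respect to $\bm x_j$ and noting that only the factor $\mathrm Y_n^k(\bm x_j)$ inside $a_n^k$ depends on $\bm x_j$ gives $\nabla^{*}_{\bm x_j}A_{N,t}(\mathrm X_N)=\frac{8\pi}{N^{2}}\,\nabla^{*}p(\bm x_j)$, so the stationary hypothesis is equivalent to $\nabla^{*}p(\bm x_j)=0$ for every $j=1,\ldots,N$.

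Next, I pick a polynomial extension $P\in\mathbb R[x_1,x_2,x_3]$ of $p$ of total degree at most $t$ (the harmonic representative delivered by the spherical harmonic expansion will do). On $\mathbb S^2$ one has the standard identity $\nabla^{*}p(\bm x)=\nabla P(\bm x)-(\bm x\cdot\nabla P(\bm x))\bm x$, and so the three Cartesian components
\[
(\nabla^{*}p)_{i}(\bm x)=\partial_{i}P(\bm x)-x_{i}\sum_{j=1}^{3}x_{j}\partial_{j}P(\bm x),\qquad i=1,2,3,
\]
are restrictions to $\mathbb S^2$ of polynomials of total degree at most $t+1$ on $\mathbb R^{3}$. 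Thus $(\nabla^{*}p)_{i}\in\Pi_{t+1}$ and each vanishes at every point of $\mathrm X_N$, so the fundamental-system assumption together with Definition~\ref{Def6} forces each to vanish identically on $\mathbb S^2$. Hence $\nabla^{*}p\equiv 0$ on $\mathbb S^2$; by connectedness $p$ is constant, and since $p$ has no $\mathrm Y_0^{1}$ component its mean over $\mathbb S^2$ is zero, so that constant must be zero and $p\equiv 0$. Orthonormality of $\{\mathrm Y_n^{k}\}$ then yields $a_n^{k}=0$ for every admissible $n,k$, whence $A_{N,t}(\mathrm X_N)=0$ and Theorem~\ref{thm1} delivers that $\mathrm X_N$ is a spherical $t$-design.

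The one non-routine step is the degree bookkeeping: although $p\in\Pi_t$, the tangential correction $\bm x(\bm x\cdot\nabla P)$ required to project $\nabla P$ onto the tangent space pushes the Cartesian components of $\nabla^{*}p$ up to degree $t+1$, not $t-1$. This is precisely why the fundamental system hypothesis is imposed at level $t+1$, and it also explains the side condition $N\ge(t+2)^{2}=\dim\Pi_{t+1}$, which by Lemma~\ref{lem4} is the minimum cardinality that allows $\mathbf Y_{t+1}(\mathrm X_N)$ to have full row rank and hence $\mathrm X_N$ to be a fundamental system for $\Pi_{t+1}$. Everything else (the $8\pi/N^2$ constant, orthonormality, and the step from $p\equiv\mathrm{const}$ to $p\equiv 0$) is bookkeeping.
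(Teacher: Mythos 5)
The paper does not prove this lemma itself but cites it from \cite{an2010well}, and your argument is correct and reconstructs essentially the proof given there: stationarity is equivalent to $\nabla^{*}p(\bm x_j)=0$ for the degree-$t$ polynomial $p$ with coefficients $\sum_{i}\mathrm Y_n^k(\bm x_i)$, the Cartesian components of $\nabla^{*}p$ are restrictions of polynomials of degree at most $t+1$, and the fundamental system hypothesis for $\Pi_{t+1}$ then forces $p$ to be constant, hence zero by its vanishing mean, so $A_{N,t}(\mathrm X_N)=0$ and Theorem~\ref{thm1} applies. Your degree bookkeeping and the explanation of why the hypothesis sits at level $t+1$ rather than $t$ are exactly the right points to emphasize; I see no gaps.
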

\noindent Based on these results, we have the applicable first order condition for spherical $t$-designs as follows.
\begin{thm}
\label{thm2}
Let $t\ge 2$ and $N\ge(t+2)^2$. Assume $\mathrm X_N\subset\mathbb S^2$ is a stationary point set of $A_{N,t}$ and the minimal singular value of basis matrix $\mathbf Y_{t+1}(\mathrm X_N)$ is positive. Then $\mathrm X_N$ is a spherical $t$-design.
\end{thm}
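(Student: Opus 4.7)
The plan is to chain Lemma \ref{lem4} and Lemma \ref{lem2}, using the elementary fact that positivity of the smallest singular value of a (weakly) wide matrix is equivalent to that matrix having full row rank. The stationarity hypothesis is already in exactly the shape required by Lemma \ref{lem2}; what I need to manufacture from the singular value assumption is the statement that $\mathrm X_N$ is a fundamental system for $\Pi_{t+1}$.

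First I would unpack sizes. The matrix $\mathbf Y_{t+1}(\mathrm X_N)$ lives in $\mathbb R^{(t+2)^2\times N}$, and the hypothesis $N\ge (t+2)^2$ forces the number of columns to be at least the number of rows. For such a matrix, saying that the smallest of its $\min\{(t+2)^2,N\}=(t+2)^2$ singular values is strictly positive is equivalent to the Gram matrix $\mathbf Y_{t+1}(\mathrm X_N)\mathbf Y_{t+1}(\mathrm X_N)^\top$ being positive definite, which is in turn equivalent to $\mathbf Y_{t+1}(\mathrm X_N)$ having row rank $(t+2)^2$, i.e.\ full row rank. This is a one-line remark about rectangular matrices and requires no new machinery.

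Next I would invoke Lemma \ref{lem4} with $t$ replaced by $t+1$: full row rank of $\mathbf Y_{t+1}(\mathrm X_N)$ is, by definition, the statement that $\mathrm X_N$ is a fundamental system for $\Pi_{t+1}$. Combined with the assumed stationarity of $\mathrm X_N$ for $A_{N,t}$ and the size condition $N\ge (t+2)^2$, every hypothesis of Lemma \ref{lem2} is now in place, and its conclusion delivers that $\mathrm X_N$ is a spherical $t$-design.

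There is no genuine obstacle here: the theorem is a useful repackaging of Lemma \ref{lem2}, trading the qualitative ``fundamental system'' hypothesis for the quantitative ``minimal singular value positive'' hypothesis. The point of the reformulation is that the latter quantity is numerically computable and can be monitored during the BB iteration developed in Section \ref{sec3}, which is how the theorem will be used later in the paper.
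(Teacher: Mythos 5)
Your proposal is correct and follows essentially the same route as the paper: positivity of the smallest singular value gives full (row) rank of $\mathbf Y_{t+1}(\mathrm X_N)$, Lemma \ref{lem4} converts this to the fundamental-system property for $\Pi_{t+1}$, and Lemma \ref{lem2} then yields the conclusion. Your version is in fact slightly more careful than the paper's, since you explicitly note that $N\ge(t+2)^2$ makes the matrix wide so that ``full rank'' means full row rank as required by Lemma \ref{lem4}.
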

\begin{proof}
Suppose that the minimal singular value of $\mathbf Y_{t+1}(\mathrm X_N)$ is positive, then we have all the singular values of $\mathbf Y_{t+1}(\mathrm X_N)$ are positive immediately. We know that the number of non-zero singular values of $\mathbf Y_{t+1}(\mathrm X_N)$ equals the rank of $\mathbf Y_{t+1}(\mathrm X_N)$, so $\mathbf Y_{t+1}(\mathrm X_N)$ is of full rank, which means $\mathrm X_N$ is a fundamental system of $\Pi_{t+1}$ by Lemma \ref{lem4}. And then suppose that $\mathrm X_N$ is a stationary point set, then $\mathrm X_N$ is a spherical $t$-design by Lemma \ref{lem2}. Hence, we complete the proof.
\end{proof}
\noindent Theorem \ref{thm2} is useful in first order optimization method, which provides a simple way to verify the global minimizer to the objective function.
\section{Iterative methods for finding spherical $t$-designs}
\label{sec3}
\subsection{Algorithm design}
Fix $N$ and $t$, for objective function $A_{N,t}: \mathbb S^{2\times N}\to\mathbb R$, we consider the optimization problem
\begin{align}
\label{eq5}
\min_{\mathrm X_N\subset\mathbb S^2} A_{N,t}(\mathrm X_N).
\end{align}
Apparently, $A_{N,t}$ is a non-convex function. For computing $\mathrm X_N$ conveniently, we assume the first point $\bm x_1=(0,0,1)^\top$ is the north pole point and the second point $\bm x_2=(x_2,0,z_2)^\top$ is on the primer meridian. Then we can define coordinates convert functions $\eta:\mathbb R^{3\times N}\to\mathbb R^{1\times 2N-3}$ which can convert Cartesian coordinates into spherical coordinates as a vector, and $\mu:\mathbb R^{1\times 2N-3}\to\mathbb R^{3\times N}$ which can convert a vector form spherical coordinates into Cartesian coordinates as a matrix. So for $(\theta,\phi)\in[0,\pi]\times[0,2\pi)$ we have
\begin{align*}
\eta(\mathrm X_N)&=\eta
\begin{bmatrix}
\sin\theta_1\cos\phi_1 & \cdots & \sin\theta_N\cos\phi_N \\
\sin\theta_1\sin\phi_1 & \cdots & \sin\theta_N\sin\phi_N \\
\cos\theta_1 & \cdots & \cos\theta_N
\end{bmatrix}=(\Theta,\Phi),\\
\mu(\eta(\mathrm X_N))&= \mu(\Theta,\Phi)=\begin{bmatrix}
x_1 & \cdots & x_N \\
y_1 & \cdots & y_N \\
z_1 & \cdots & z_N
\end{bmatrix},
\end{align*} 
where vector $\Theta:=(\theta_2,\ldots,\theta_N)\in\mathbb R^{1\times N-1}$ and vector $\Phi:=(\phi_3,\ldots,\phi_N)\in\mathbb R^{1\times N-2}$.

We apply BB method in \cite{barzilai1988two} to construct Algorithm \ref{alg:BBs} for seeking an efficient way to compute $A_{N,t}$, that $x$ achieves the local minimum. Due to the universality of quasi-Newton method \cite{sloan2009variational}, we also apply quasi-Newton method for comparing the efficiency. And then we try to use Theorem \ref{thm2} to prove the local minimum we found is the global minimum, that is, we find the real numerical spherical $t$-design. 

To make sure that objective function $f(x_k)$ is sufficient to descend and approximate to $\varepsilon$ which is as near as $0$, we use Armijo-Goldstein rule \cite{sun2006optimization} and backtracking line search \cite{sun2006optimization} to lead BB method in a proper way to find local minimum $x^*$.

\begin{algorithm}[H]
  \caption{Barzilai-Borwein method for computing spherical $t$-designs}
  \label{alg:BBs}
  \begin{algorithmic}[1]
    \Require
      {$t$: spherical polynomial degree;
      $N$: number of points;
      $\mathrm X_N$: distributing $N$ points on unit sphere $\mathbb S^2$;
      $K_{max}$: maximum iterations;
      $\varepsilon_1$: termination tolerance on the first-order optimality;
      $\varepsilon_2$: termination tolerance on progress in terms of function or parameter changes.

     \setlength{\parindent}{1.5em} Initialize $k=1$, $x_0=x_1=\eta(\mathrm X_N)$, $f_0=f_1=A_{N,t}(\mu(x_0))$, $g_0=g_1=\eta(\nabla^*A_{N,t}(\mu(x_0)))$ and $\alpha_1=1$. }
    \While{$k\leq K_{max}$ and $\lVert g_{k+1}-g_{k}\rVert_2>\varepsilon_1$, $\lVert f_{k+1}-f_{k}\rVert_2>\varepsilon_2$ or $\lVert x_{k+1}-x_{k}\rVert_2>\varepsilon_2$}
      \State $s_k=x_{k}-x_{k-1}$, $y_k=g_{k}-g_{k-1}$
      \State  compute step size $\alpha_{k}=(s_k^\top s_k)(s_k^\top y_k)^{-1}$
      \If{$\alpha_k\leq 10^{-10}$ or $\alpha_k\geq 10^{10}$}
      \State $\alpha_k=1$
      \EndIf
      \If {$f(x_{k}-\alpha_{k}g_{k})\leq f(x_k)-\alpha_k\rho g_k^\top g_k$ and \\\ \quad$f(x_{k}-\alpha_{k}g_{k})\geq f(x_k)-\alpha_k(1-\rho)g_k^\top g_k$, $(0<\rho<\frac{1}{2})$}
      \State $\alpha_k=\alpha_k$ (Armijo-Goldstein rule)
      \Else \
      \State $\alpha_k=\tau\alpha_{k-1}$, $\tau\in(0,1)$ (backtracking line search)
      \EndIf
      \State $x_{k+1}=x_k-\alpha_k g_k$
      \State compute $f_{k+1}=A_{N,t}(\mu(x_{k+1}))$ and search direction $g_{k+1}=\eta(\nabla^* A_{N,t}(\mu(x_{k+1})))$
    \EndWhile
    \Ensure
      numerical spherical $t$-designs $x^*\subset\mathbb S^2$.
  \end{algorithmic}
\end{algorithm}
Now we give a small numerical example by using Algorithm \ref{alg:BBs}, which is used to illustrate the numerical construction of spherical $t$-design.
\begin{exmp}
\label{exmp1}
We generate spiral points $\mathrm X_4=\{\bm x_1,\bm x_2,\bm x_3,\bm x_4\}\subset\mathbb S^2$ from \cite{bauer2000distribution},
\begin{equation*}
\bm x_1=\begin{bmatrix}
0\\0\\1\end{bmatrix},
\bm x_2=\begin{bmatrix}
0.9872\\0\\-0.1595\end{bmatrix},
\bm x_3=\begin{bmatrix}
-0.3977\\0.6727\\-0.6239\end{bmatrix},
\bm x_4=\begin{bmatrix}
-0.6533\\-0.7455\\-0.1318\end{bmatrix}.
\end{equation*}
By using Algorithm \ref{alg:BBs}, we obtain the termination output: $k=25$, $\lvert A_{N,t}(\mathrm X_4^*)\rvert=2.775558\times 10^{-17}$, $\lVert \nabla^{*}A_{N,t}(\mathrm X_4^*)\rVert=1.0446\times 10^{-8}$ and $\mathrm X_4^*$ ends with value
\begin{equation*}
\bm x_1^*=\begin{bmatrix}
0\\0\\1\end{bmatrix},
\bm x_2^*=\begin{bmatrix}
0.9428\\0\\-0.3333\end{bmatrix},
\bm x_3^*=\begin{bmatrix}
-0.4714\\0.8165\\-0.3333\end{bmatrix},
\bm x_4^*=\begin{bmatrix}
-0.4714\\-0.8165\\-0.3333\end{bmatrix}.
\end{equation*}
In fact, $\mathrm X_4^*$ is a set of regular tetrahedron vertices, which is known as a spherical $2$-design. As a result, Algorithm \ref{alg:BBs} reaches the global minimum $\mathrm X_4^*$, thus numerical solutions for spherical $2$-design found. We can see the explicit change of $X_4$ by using Algorithm \ref{alg:BBs} from Figure \ref{Fig.lable1} and the behavior of objective function from Figure \ref{Fig.lable5}.
\begin{figure}[H] 
\centering 
\subfigure[Initial vertices]{ 
\label{Fig.sub.1} 
\includegraphics[width=0.4\textwidth]{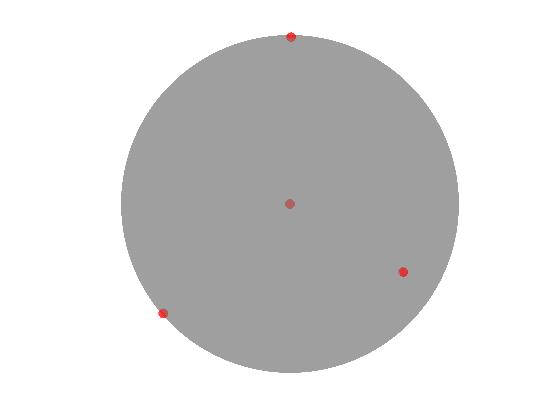}} 
\subfigure[Final vertices]{ 
\label{Fig.sub.2} 
\includegraphics[width=0.4\textwidth]{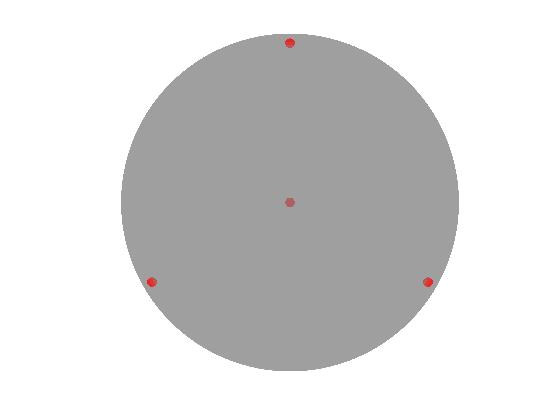}} 
\caption{Numerical simulation of regular tetrahedron vertices on $\mathbb S^2$ by using Algorithm \ref{alg:BBs}}
\label{Fig.lable1} 
\end{figure}
\begin{figure}[H] 
\centering 
\subfigure[The behavior of $A_{N,t}$]{ 
\label{Fig.sub.7} 
\includegraphics[width=0.4\textwidth]{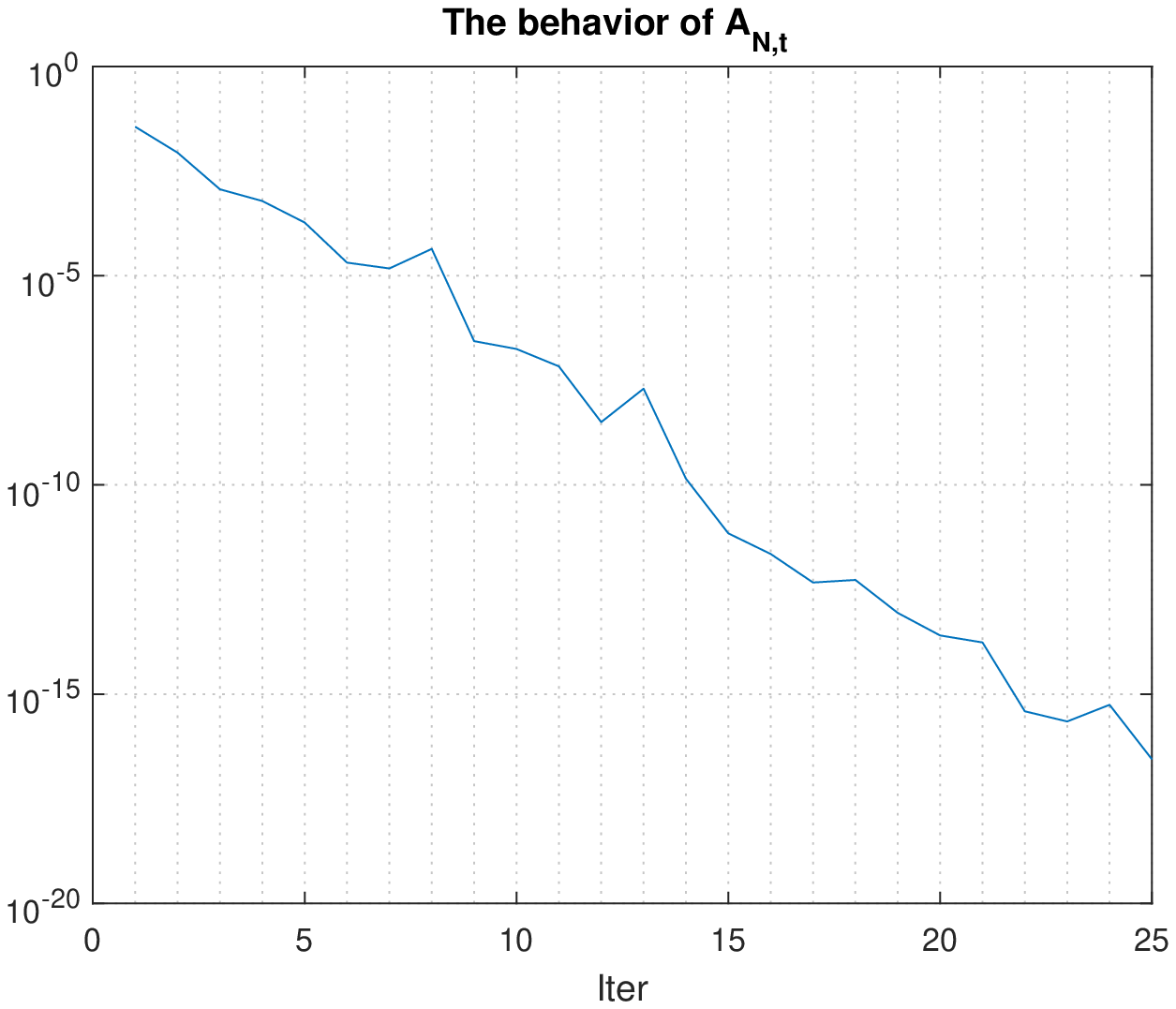}} 
\subfigure[The behavior of $\lVert\nabla^* A_{N,t}\rVert_2$]{ 
\label{Fig.sub.8} 
\includegraphics[width=0.4\textwidth]{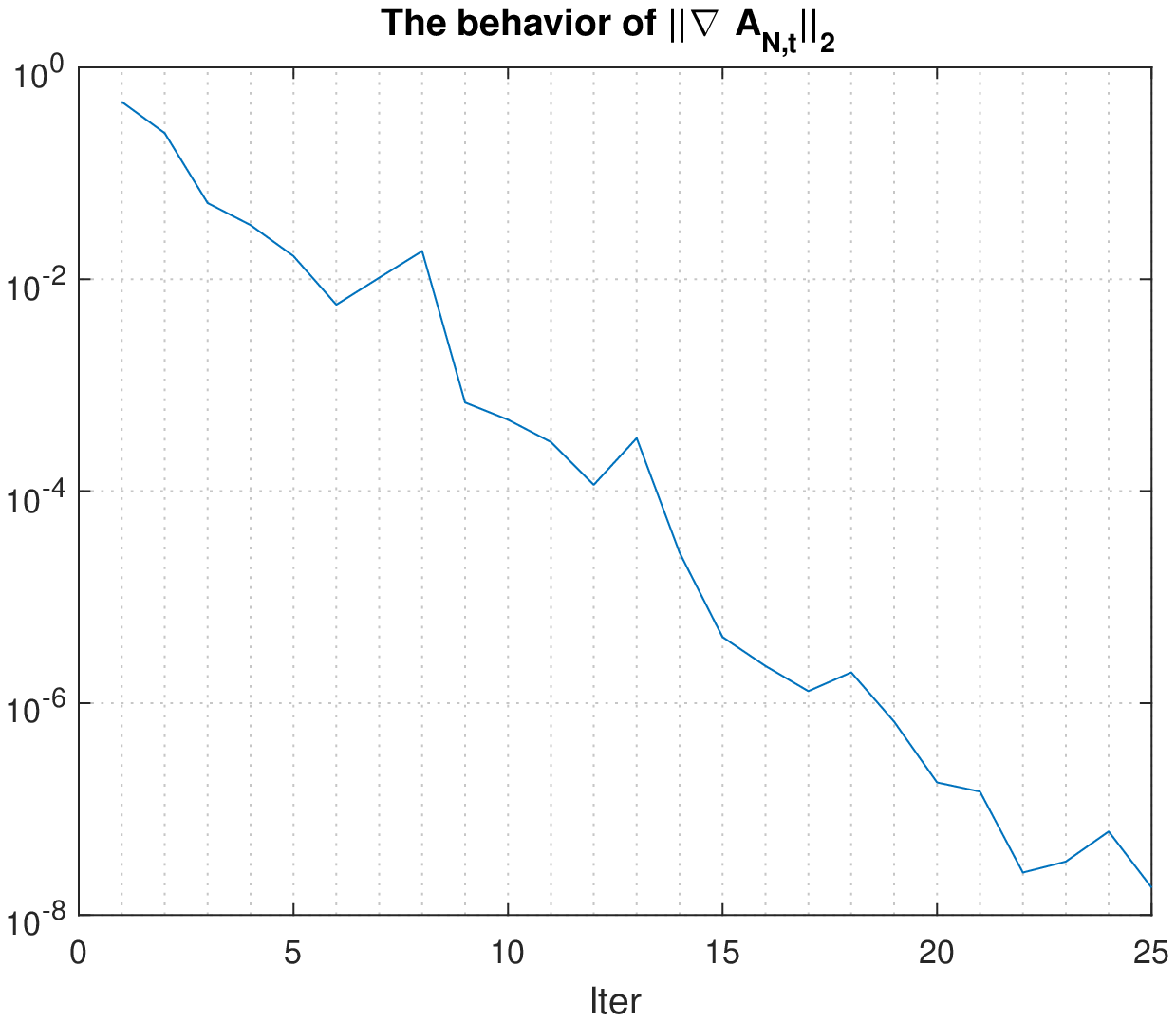}} 
\caption{Numerical behavior of $A_{N,t}$ and $\lVert\nabla^* A_{N,t}\rVert_2$ with $t=2,N=4$ by using Algorithm \ref{alg:BBs}}
\label{Fig.lable5} 
\end{figure}
\end{exmp}
\subsection{Convergence analysis}
From the view of (\ref{eq4}), we know $A_{N,t}\in C^t(\mathbb S^2)$ for $t\ge 2$. We assume that
\begin{assumption}
\label{asump1}
The level set $D:=\{x\in\mathbb R^n\vert f(x)\leq f(x_l)\}$ is bounded, and there exists $M>0$ such that $\lVert\nabla^2f(x)\rVert\leq M$, where $\nabla^2 f$ is a Hessian matrix of $f(x)$.
\end{assumption}
Now we present the convergence result on Algorithm \ref{alg:BBs}. We shall mention that the idea of proof originated in \cite{yuan1999problems,powell1976algorithm}.
\begin{thm}
\label{thm3}
Let $x_1=\eta(\mathrm X_N)$ be an initial point and $g_1=\eta(\nabla^*A_{N,t}(\mu(x_1)))$ and assume Assumption \ref{asump1} holds. Suppose that ${x_k}$ is generated by Algorithm \ref{alg:BBs}, then $\lim\limits_{k\to\infty}\inf{\lVert g_k\rVert}=0$.
\end{thm}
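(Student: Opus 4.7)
The plan is to reduce the theorem to a safeguarded Euclidean gradient method on $f:=A_{N,t}\circ\mu:\mathbb R^{2N-3}\to\mathbb R$ and then to apply the classical ``sufficient decrease plus telescoping'' template underlying \cite{yuan1999problems,powell1976algorithm}. Assumption \ref{asump1} plays a double role here: boundedness of the level set $D$ implies $f$ is bounded below on $D$ (so any monotone decreasing sequence of function values converges), while the Hessian bound $\lVert\nabla^2 f\rVert\le M$ is equivalent, via the mean value theorem, to the descent lemma
\begin{equation*}
f(x_k-\alpha g_k)\le f(x_k)-\alpha\lVert g_k\rVert^2+\tfrac{M}{2}\alpha^2\lVert g_k\rVert^2.
\end{equation*}
These are the only analytic inputs needed; the rest is bookkeeping on the step-size rule.

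First I would establish a uniform sufficient-decrease inequality $f(x_k)-f(x_{k+1})\ge c\,\alpha_k\lVert g_k\rVert^2$ for some constant $c>0$ independent of $k$. On iterations where the Armijo--Goldstein test succeeds this is immediate with $c=\rho$. On iterations where the fallback $\alpha_k=\tau\alpha_{k-1}$ is taken, I would combine the descent lemma with the safeguard interval $[10^{-10},10^{10}]$ and the contraction factor $\tau\in(0,1)$ to argue that once $\alpha_k$ has been contracted into the regime $\alpha_k\le 2(1-\rho)/M$, the descent lemma by itself already delivers $f(x_k)-f(x_{k+1})\ge\rho\,\alpha_k\lVert g_k\rVert^2$. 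Induction then keeps $\{x_k\}\subset D$ for all $k$, and telescoping the sufficient-decrease inequality yields $\sum_{k\ge 1}\alpha_k\lVert g_k\rVert^2\le\rho^{-1}(f(x_1)-\inf_D f)<\infty$.

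To conclude $\liminf_{k\to\infty}\lVert g_k\rVert=0$ I would split into two cases. If $\{\alpha_k\}$ is bounded below by some $\alpha_{\min}>0$, then $\sum_k\lVert g_k\rVert^2<\infty$ and in particular $\lVert g_k\rVert\to 0$. Otherwise choose a subsequence with $\alpha_{k_j}\to 0$; since the BB step is reset to $1$ whenever it leaves $[10^{-10},10^{10}]$, the only mechanism producing $\alpha_{k_j}\to 0$ is repeated fallback, and combining the lower half of the Armijo--Goldstein condition $f(x_k-\alpha_k g_k)\ge f(x_k)-\alpha_k(1-\rho)\lVert g_k\rVert^2$ with the Lipschitz continuity of $\nabla f$ forces $\lVert g_{k_j}\rVert\to 0$ along that subsequence.

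The step I expect to be the main obstacle is the rigorous handling of the single-step backtracking fallback: because the algorithm does not re-check the Armijo--Goldstein condition after the contraction $\alpha_k=\tau\alpha_{k-1}$, one cannot simply quote the textbook proof for a full Armijo line search. The workaround is to substitute the descent lemma as a surrogate for the Armijo inequality once $\alpha_k$ has shrunk sufficiently, and to exploit the reset-to-$1$ safeguard to rule out long stretches of backtracking with pathologically small step sizes.
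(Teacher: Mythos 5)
Your overall skeleton (sufficient decrease, telescoping against the lower bound of $f$ on the level set $D$, then extracting $\liminf_{k\to\infty}\lVert g_k\rVert=0$) matches the paper's, but you take a genuinely different route at the key step of bounding the decrease per iteration. The paper never invokes the descent lemma or a case analysis on $\liminf\alpha_k$: it uses \emph{both} halves of the Armijo--Goldstein test directly. From the lower (Goldstein) inequality $f(x_k-\alpha_kg_k)\ge f(x_k)-\alpha_k(1-\rho)g_k^\top g_k$, the mean value theorem, the Cauchy inequality and the Hessian bound $M$ of Assumption \ref{asump1}, it extracts the implicit step-size lower bound $\alpha_k\lVert g_k\rVert\ge\rho\lVert g_k\rVert/M$; substituting this into the upper (Armijo) inequality then gives $f(x_{k+1})\le f(x_k)-\tfrac{\rho^2}{M}\lVert g_k\rVert^2$ in one stroke, so summability of $\lVert g_k\rVert^2$ follows without ever discussing whether $\{\alpha_k\}$ is bounded below. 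Your closing two-case split (either $\alpha_k\ge\alpha_{\min}>0$, or a subsequence $\alpha_{k_j}\to 0$) is therefore unnecessary under the paper's reading of the algorithm, in which every accepted step is assumed to satisfy both inequalities.

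That said, the obstacle you flag --- the single-shot fallback $\alpha_k=\tau\alpha_{k-1}$ in Algorithm \ref{alg:BBs} with no re-check of the Armijo--Goldstein test --- is real, and neither your workaround nor the paper closes it. The paper simply applies marks 7 and 8 at every iteration as if they always held, silently ignoring the \textbf{else} branch. Your proposed surrogate fails for a concrete reason: the algorithm contracts only once per iteration, and $\alpha_{k-1}$ may be as large as $10^{10}$ (or equal to $1$ after a reset), so $\tau\alpha_{k-1}$ need not lie in the regime $\alpha_k\le 2(1-\rho)/M$ where the descent lemma yields $f(x_k)-f(x_{k+1})\ge\rho\,\alpha_k\lVert g_k\rVert^2$; on such an iteration $f$ may increase, monotonicity and the inclusion $\{x_k\}\subset D$ break, and the telescoping collapses. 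A rigorous treatment would have to either iterate the backtracking until acceptance (making it a genuine line search) or show that fallback iterations occur only finitely often or contribute a summable perturbation. So: a different and in places more careful route, but your handling of the fallback step is a genuine gap --- one the paper shares by omission rather than resolves.
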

\begin{proof}
By using the Armijo rule (mark 8) from Algorithm \ref{alg:BBs} and mean value theorem, we have
\begin{align}
f(x_k)-f(x_k-\alpha_kg_k)=\alpha_k\nabla f(x_k-\kappa\alpha_kg_k)^\top g_k\leq \alpha_k(1-\rho)g_k^\top g_k,
\end{align}
where $\nabla f$ is a gradient of $f$ and $\kappa\in(0,1)$, then
\begin{align}
\rho g_k^\top g_k\leq (\nabla f(x_k)-\nabla f(x_k-\kappa\alpha_kg_k))^\top g_k.
\end{align}
According to Cauchy inequality, we obtain
\begin{align}
\label{eq1}
\rho g_k^\top g_k\leq (\nabla f(x_k)-\nabla f(x_k-\kappa\alpha_kg_k))^\top g_k\leq\lVert\nabla f(x_k)-\nabla f(x_k-\kappa\alpha_kg_k)\rVert\lVert g_k\rVert,
\end{align}
moreover, by using mean value theorem
\begin{align}
\label{eq2}
\lVert\nabla f(x_k)-\nabla f(x_k-\kappa\alpha_kg_k)\rVert=\lVert\int_0^1 F(x_k-\xi\kappa\alpha_kg_k)\kappa\alpha_kg_k\,d\xi\rVert\leq M\kappa\alpha_k\lVert g_k\rVert.
\end{align}
Combine (\ref{eq1}) and (\ref{eq2}), we know
\begin{align}
\rho g_k^\top g_k\leq M\kappa\alpha_k\lVert g_k\rVert,
\end{align}
therefore
\begin{align}
\label{eq6}
\alpha_k\lVert g_k\rVert\geq\frac{\rho g_k^\top g_k}{M\lVert g_k\rVert}.
\end{align}
By Armijo rule (mark 7) from Algorithm \ref{alg:BBs} and (\ref{eq6}), we have
\begin{align}
f(x_{k+1})\leq f(x_k)-\alpha_k\rho\lVert g_k\rVert\frac{g_k^\top g_k}{\lVert g_k\rVert}\leq f(x_k)-\frac{\rho^2}{M}(\frac{g_k^\top g_k}{\lVert g_k\rVert})^2,
\end{align}
thus
\begin{align}
\sum\limits_{j=1}^k(\frac{g_j^\top g_j}{\lVert g_j\rVert})^2\leq\frac{M}{\rho^2}(f(x_1)-f(x_{k+1})).
\end{align}
Since $D$ is bounded, we know $\lim\limits_{k\to\infty}f(x_{k+1})$ exists, therefore
\begin{align}
\label{eq7}
\sum\limits_{j=1}^k(\frac{g_j^\top g_j}{\lVert g_j\rVert})^2<+\infty,
\end{align}
hence
\begin{align}
\lim\limits_{k\to\infty}\frac{g_k^\top g_k}{\lVert g_k\rVert}=0.
\end{align}
Now we assume that $\lim\limits_{k\to\infty}\sup\lVert g_k\rVert\neq 0$. We can find a set of $\{k_n\}$ ($n\in\mathbb Z^+$), when $n\to\infty$, $k_n\to\infty$, and there exist $\epsilon>0$ such that $\lVert g_{k_n}\rVert>\epsilon$. Therefore, (\ref{eq7}) can not be hold, which contradicts. Thus, $\lim\limits_{k\to\infty}\inf\lVert g_k\rVert=\lim\limits_{k\to\infty}\sup\lVert g_k\rVert=0$, we complete the proof.
\end{proof}
Theorem \ref{thm3} shows the convergence of Algorithm \ref{alg:BBs}. Based on the above theorems, we summarize the following results. 
\begin{rem}
\label{rem1}
Let $x_k$ be the starting points set of Algorithm \ref{alg:BBs}, by Theorem \ref{thm3}, then there exist $\epsilon>0$ such that $\lim\limits_{k\to\infty}\nabla^* A_{N,t}(\mu(x_k))=\bm 0$ when $k>\epsilon$, where $\bm 0\in\mathbb R^{1\times N}$ is a zero vector. Therefore, $\mathrm X_N$ is a stationary points set of $A_{N,t}$.
\end{rem}
\begin{rem}
\label{rem2}
Let $x_k$ be the starting points set of Algorithm \ref{alg:BBs}, then we have $\lim\limits_{k\to\infty}A_{N,t}(\mu(x_k))=0$. If Theorem \ref{thm2} is established in $x^*$, then $x^*$ is a spherical $t$-design. 
\end{rem}
\section{Numerical results}
\label{sec5}
Based on the code in \cite{sloan2009variational,schmidt2005minFunc}, we present the feasibility of Algorithm {\ref{alg:BBs}} to compute spherical $t$-design with the point set $\mathrm X_N$ where $N=(t+2)^2$ for $t+1$ up to 127. As an initial point set $\mathrm X_N$ to solve the optimization problem of minimizing $A_{N,t}(\mathrm X_N)$ from (\ref{eq4}), we use the extremal systems from \cite{sloan2004extremal} without any additional constraints. To make sure BB method is meaningful in spherical $t$-designs, we compare BB method with quasi-Newton method(QN). These methods are implemented in Matlab R2015b and tested on an Intel Core i7 4710MQ CPU with 16 GB DDR3L memory and a 64 Bit Windows 10 Education.

We present the results in Table \ref{table1} and Table \ref{table2}, these numerical spherical $t$-designs can be founded in \cite{anxiao2019sphere}. We observe that BB method cost less time than quasi-Newton method, especially in large $\mathrm X_N$. Furthermore, all point sets $\mathrm X_N$ are verified to be fundamental systems. In fact, we use singular value decomposition (SVD) \cite{sun2006optimization} to obtain all singular values of $\mathbf Y_{t+1}(\mathrm X_N)$, which are defined as $\{\sigma_i\}$ for $i=1,\ldots,(t+2)^2$. As a result, the $\min(\sigma_i)>0$, then $\mathbf Y_{t+1}(\mathrm X_N)$ is of full rank, thus $\mathrm X_N$ is a fundamental system. This is a strong numerical support to Theorem \ref{thm2}. Here we set $\varepsilon_1=\varepsilon_2=10^{-16}$.

Figure \ref{Fig.sub.3} and Figure \ref{Fig.sub.5} are well exhibited the locally R-linear convergence \cite{dai2002r} of BB method by numerical computation of $A_{N,t}$ with $t=50,N=2601$. We can see that $A_{N,t}$ converges to $0$ with iteration increase. 

\begin{table}[H]
\centering
\caption{Computing of spherical $t$-designs by BB method}\label{table1}
\begin{tabular}{|cc|ccclc|}
\hline
 $t+1$ & $N$ & Iteration & $A_{N,t}(\mathrm X_N)$ & $\lVert\nabla^* A_{N,t}(\mathrm X_N)\rVert_{\infty}$ & Time & $\min(\sigma_i)$\\
\hline
10 & 121 & 100 & 7.796661e-16 & 1.8478e-09 & 1.049909s & 1.3270 \\
50 & 2601 & 335 & 1.879594e-12 & 6.3307e-09 & 336.663545s & 2.3394 \\
96 & 9409 & 715 & 8.237123e-10 & 7.3212e-08 & 22724.767736s & 2.1647 \\
127 & 16384 & 803 & 8.229142e-10 & 2.7122e-08 & 84579.358811s & 1.9673 \\
\hline
\end{tabular}
\end{table}

\begin{table}[H]
\centering
\caption{Computing of spherical $t$-designs by quasi-Newton method}\label{table2}
\begin{tabular}{|cc|ccclc|}
\hline
 $t+1$ & $N$ & Iteration & $A_{N,t}(\mathrm X_N)$ & $\lVert\nabla^* A_{N,t}(\mathrm X_N)\rVert_{\infty}$ & Time & $\min(\sigma_i)$\\
\hline
10 & 121 & 81 & 1.054716e-15 & 2.1856e-09 & 1.062004s & 1.3232 \\
50 & 2601 & 278  & 8.455657e-15 & 2.4398e-09 & 397.480675s & 2.3380 \\
96 & 9409  & 465 & 1.418436e-14 & 1.4049e-09 & 40101.809512s & 2.1637 \\
127 & 16384  & 543 & 3.709079e-13 & 2.0536e-09 & 143631.176093s & 1.9656 \\
\hline
\end{tabular}
\end{table}

\begin{figure}[H]
\centering
\subfigure[Barzilai-Borwein method]{
\label{Fig.sub.3}
\includegraphics[width=0.4\textwidth]{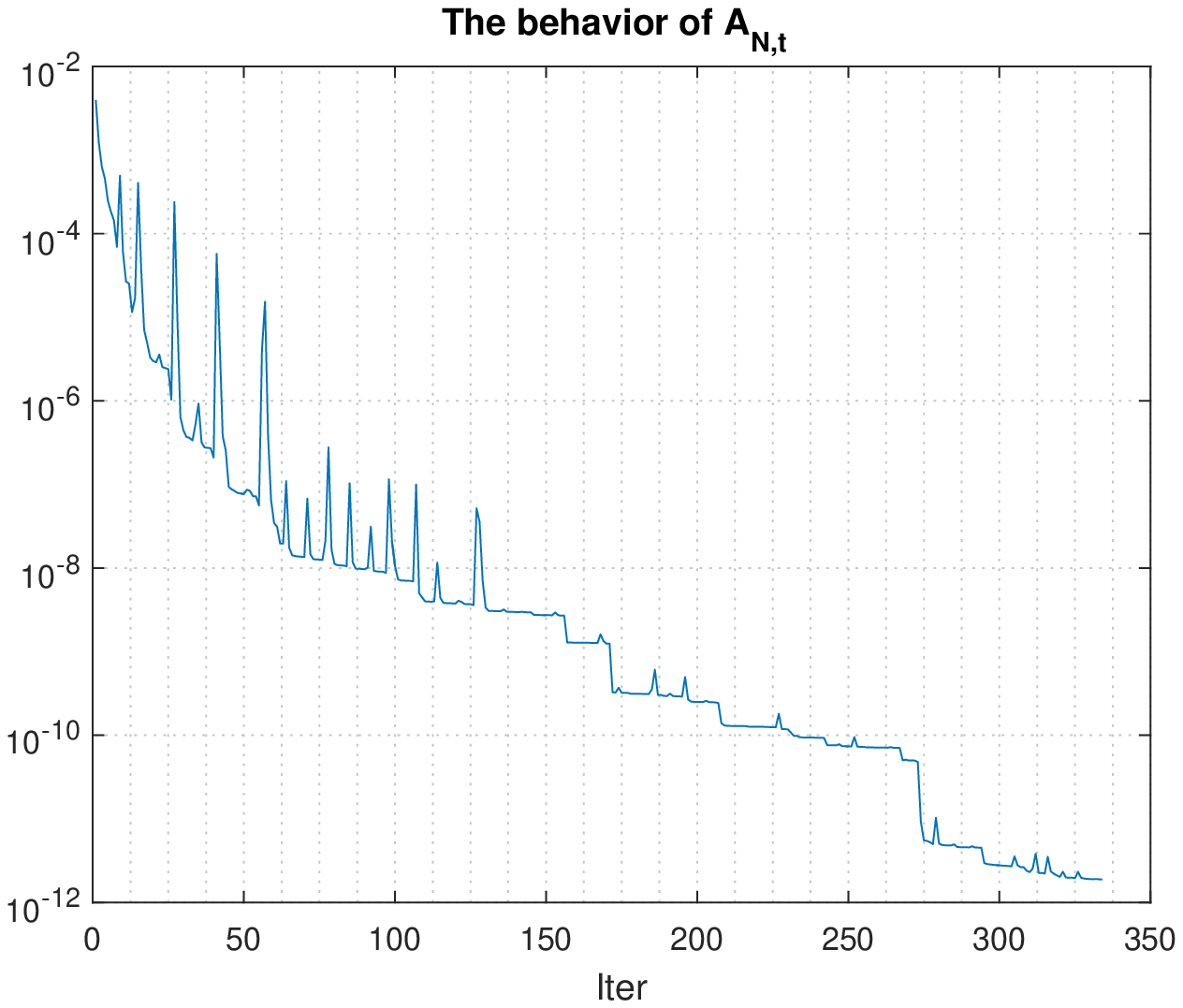}}
\subfigure[Quasi-Newton method]{
\label{Fig.sub.4}
\includegraphics[width=0.4\textwidth]{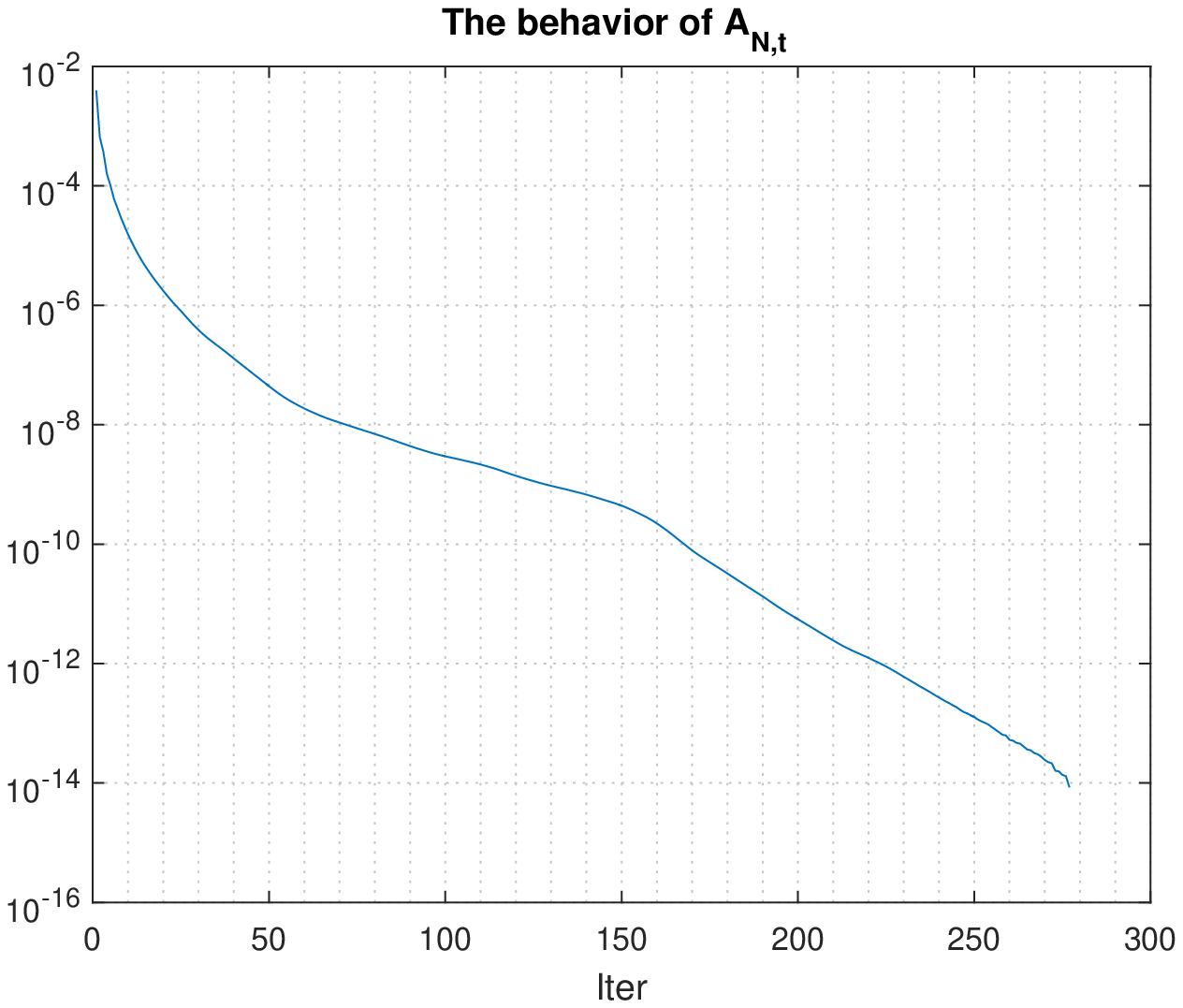}}
\caption{The behavior of $A_{N,t}$ for $t=50,N=(t+1)^2$ on $\mathbb S^2$ in each iteration}
\label{Fig.lable2}
\end{figure}

\begin{figure}[H]
\centering
\subfigure[Barzilai-Borwein method]{
\label{Fig.sub.5}
\includegraphics[width=0.4\textwidth]{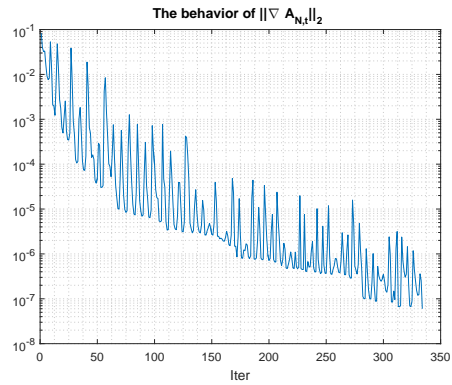}}
\subfigure[Quasi-Newton method]{
\label{Fig.sub.6}
\includegraphics[width=0.4\textwidth]{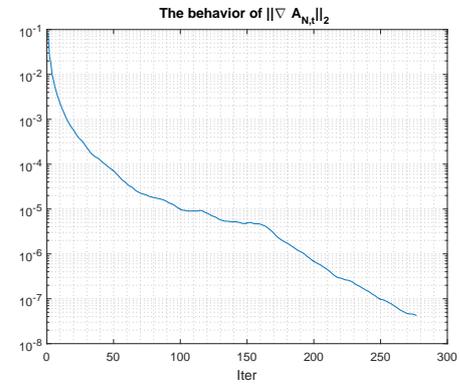}}
\caption{The behavior of $\lVert\nabla^* A_{N,t}(\mathrm X_N)\rVert_{2}$ for $t=50,N=(t+1)^2$ on $\mathbb S^2$ in each iteration}
\label{Fig.lable3}
\end{figure}

\begin{figure}[H] 
\centering 
\subfigure[Barzilai-Borwein method]{ 
\label{Fig.sub.9} 
\includegraphics[width=0.4\textwidth]{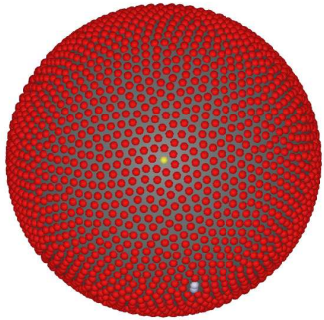}} 
\subfigure[Quasi-Newton method]{ 
\label{Fig.sub.10} 
\includegraphics[width=0.4\textwidth]{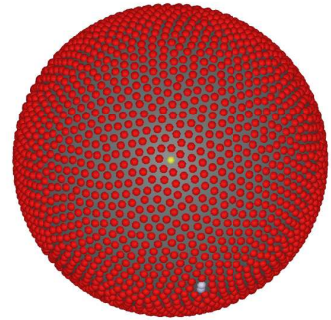}} 
\caption{Numerical simulation for $t=50,N=(t+1)^2$ on $\mathbb S^2$ by using different methods} 
\label{Fig.lable6} 
\end{figure}

\section{Conclusion}
\label{sec6}
In this paper, we employ Barzilai-Borwein method for finding numerical spherical $t$-designs with $t$ up to 126 with $N=(t+1)^2$. This method performs high efficiency and accuracy. Moreover, we check numerical solution as global minimizer with positivity of minimal singular value of basis matrix. Numerical experiments show that Barzilai-Borwein method is better than quasi-Newton method in time efficiency for solving large scale spherical $t$-designs. These numerical results are interesting and inspiring. The numerical construction of higher order spherical $t$-designs are expected in future study.
\section*{Acknowledgements}
This work is supported by National Natural Science Foundation of China (Grant No.11301222) and the Opening Project of Guangdong Province Key Laboratory of Computational Science at the Sun Yat-sen University (Grant No.2018014).

\small


\end{document}